\DeclareMathOperator{\conv}{conv}
\DeclareMathOperator{\M}{M}
\DeclareMathOperator{\FM}{FM}
\DeclareMathOperator{\K}{K}
\DeclareMathOperator{\PP}{P}
\DeclareMathOperator{\NP}{NP}
\DeclareMathOperator{\x}{\mathbf{x}}
\theoremstyle{plain}
	\newtheorem{theorem}{Theorem}[section]
	\newtheorem{lemma}[theorem]{Lemma}
	\newtheorem{proposition}[theorem]{Proposition}
        \theoremstyle{definition}
	\newtheorem{definition}[theorem]{Definition}
	\newtheorem*{observation}{Observation}
\begin{document}
\title{Normality of $k$-Matching Polytopes of Bipartite Graphs}

        \author[Juan Camilo Torres]{Juan Camilo Torres$^1$}
\date{}
\address{$^1$ Departamento de Matem\'{a}ticas, Universidad de los Andes, Bogot\'{a}, Colombia}
\address{Departamento de Matem\'{a}ticas, Universidad Militar Nueva Granada, Cajic\'{a}, Colombia}
\thanks{Email: jc.torresc@uniandes.edu.co}
\thanks{The author was supported by an internal research grant (INV-2018-48-1373) from the Faculty of Sciences of the Universidad de los Andes. The author was also supported in his doctoral studies, of which this project forms a part, by the Colombian Government through  Minciencias.}

\begin{abstract}
The $k$-matching polytope of a graph is the convex hull of all its matchings of a given size $k$ when they are considered as indicator vectors. In this paper, we prove that the $k$-matching polytope of a bipartite graph is normal, that is, every integer point in its $t$-dilate is the sum of $t$ integers points of the original polytope. This generalizes the known fact that Birkhoff polytopes are normal. As a preliminary result, we prove that for bipartite graphs the $k$-matching polytope is equal to the fractional $k$-matching polytope, having thus the $H$-representation of the polytope. This generalizes the Birkhoff-Von Neumann Theorem which establish that every doubly stochastic matrix can be written as a convex combination of permutation matrices.   
\end{abstract}

\maketitle

\section{Introduction}
Matchings are important in combinatorial optimization not only for their applications, but also because of their intriguing status from the viewpoint of computational complexity. There is a polynomial-time algorithm to find a perfect matching if one exists, or to show that one does not exist. This is the method of augmenting paths, which is one of the first non-trivial polynomial-time algorithms that we encounter in combinatorial optimization. Furthermore, although decision problems such as \textquotedblleft does $G$ have a perfect matching?", or \textquotedblleft does $G$ have a $k$-matching?" are in $\PP$, the counting problem \textquotedblleft how many perfect matchings does $G$ have?" is $\#\PP$-complete, even for bipartite graphs (see, for example, \cite{DK11}). That means that counting matchings is just as hard as counting 3-colorings or large cliques or other objects for which the decision problem is $\NP$-complete.

After indexing the vertices of a graph, matchings can be seen as vectors, and we can take any subset of matchings and form the corresponding convex hull to obtain a polytope. The polytopes of all matchings and of maximal matchings are well-studied but much less so when we only consider matchings of a given size $k$. We will call them $k$-matching polytopes, and they will be the focus of our paper. (There is a concept similar in name called the $\mathbf{b}$-matching polytope but it is a different object. For a graph $G=(V,E)$ and a vector $\mathbf{b}\in\mathbb{Z}^V$, the $\mathbf{b}$-matching polytope is the convex hull of all vectors $\mathbf{x}\in\mathbb{Z}^E$ with nonnegative entries such that $\sum_{e\in v}x_e\leq b_v$ for all $v\in V$. See \cite{B13} for more about these of polytopes.)

The main purpose of our paper is to prove that the $k$-matching polytope of a bipartite graph is normal, that is, every integer point in its $t$-dilate is the sum of $t$ integer points of the original polytope, which will be done in Section \ref{sec:normal}. It is known that Birkhoff polytopes are normal which is a special case of our result. As a preliminary result, in Section \ref{sec:hrepr}, we will prove that the $k$-matching polytope is equal to the fractional $k$-matching polytope for bipartite graphs, thus obtaining the $H$-representation for this polytope  (a result proved in \cite{GL09} for the special case of $k$-assignment polytopes). These give a generalization of the Birkhoff-Von Neumann Theorem that asserts that every doubly stochastic matrix is a convex combination of permutation matrices.

The results presented in this paper were part of the doctoral thesis of the author, see \cite{T21}. He would like to thank his advisor Tristram Bogart for useful discussions on this topic.
\section{Preliminaries}
In this section, we recall some basic concepts and results about matchings, which have been of special interest in graph theory and combinatorial optimization.
\begin{definition}
A \emph{matching} $M$ of a graph $G=(V,E)$ is a collection of edges of $G$ such that no two different edges in $M$ are incident to a common vertex. If $|M|=k$, we say that $M$ is a $k$-\emph{matching}, and the matching is \emph{perfect} if every vertex of $G$ belongs to an edge in $M$. The vector $\chi(M)\in\mathbb{R}^E$ defined by
\[\chi(M)_e=
\begin{cases}
1 & \text{if } e\in M\\
0 & \text{if } e\notin M
\end{cases}
\]
is called the characteristic vector of $M$.
\end{definition}
Given a graph, we can construct a polytope that encodes its matching structure.
\begin{definition}
Let $G$ be a graph. The polytope
\[\M(G):=\conv\{\chi(M): M \text{ is a matching of } G\}\]
in $\mathbb{R}^E$ is called the \emph{matching polytope} of $G$.
\end{definition}
Associated to this construction is the fractional matching polytope, which is a natural linear relaxation of the matching polytope.
\begin{definition}
Let $G$ be a graph. The polytope
\[\FM(G):= \{ \x\in\mathbb{R}^E:\sum_{e\ni v}x_e\leq 1\  \ \forall v\in V,\ x_e\geq 0\  \ \forall e\in E\}\]
in $\mathbb{R}^E$ is called the \emph{fractional matching polytope} of $G$.  
\end{definition}
In general the matching polytope and the fractional matching polytope are not equal. Nonetheless, they are equal for bipartite graphs.
\begin{theorem}\label{fractional}
If $G$ is a bipartite graph, then $\M(G)=\FM(G)$.
\end{theorem}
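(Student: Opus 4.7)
The plan is to prove the two inclusions $\M(G)\subseteq\FM(G)$ and $\FM(G)\subseteq\M(G)$ separately. The first is immediate: for any matching $M$, the characteristic vector $\chi(M)$ is $\{0,1\}$-valued and satisfies $\sum_{e\ni v}\chi(M)_e\leq 1$ at every vertex $v$, so every generator of $\M(G)$ lies in $\FM(G)$, which is convex. Hence the real work is to prove $\FM(G)\subseteq\M(G)$, which I would do by showing that every vertex of $\FM(G)$ is the characteristic vector of some matching.

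My preferred approach is the combinatorial perturbation argument, which is self-contained and avoids quoting total unimodularity. Let $\x$ be a vertex of $\FM(G)$, and suppose for contradiction that some coordinate $x_e$ is strictly fractional. Let $F\subseteq E$ denote the set of edges with $0<x_e<1$, and consider the subgraph $H=(V,F)$. The key observation is that $H$ cannot have a vertex of degree $1$ whose incident fractional edge $e$ satisfies the vertex constraint $\sum_{e'\ni v}x_{e'}=1$ at both endpoints; more generally, by pushing mass along alternating structures in $H$ I will produce two distinct points $\x^+,\x^-\in\FM(G)$ with $\x=\tfrac12(\x^++\x^-)$, contradicting that $\x$ is a vertex.

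Concretely, I would carry this out in two cases. First, if $H$ contains a cycle $C$, then $C$ must have even length because $G$ is bipartite; I can two-color the edges of $C$ and perturb $\x$ by $+\epsilon$ on one color class and $-\epsilon$ on the other for a small enough $\epsilon>0$. This keeps every vertex sum unchanged and keeps all coordinates in $[0,1]$, giving the desired convex combination. Second, if $H$ is a forest, I would take a maximal path $P=v_0 e_1 v_1 \cdots e_\ell v_\ell$ in $H$, chosen so that the endpoints $v_0,v_\ell$ have no other fractional edges; then I would try to alternately perturb along $P$. The subtle point is dealing with the endpoint vertex constraints: if $\sum_{e\ni v_0} x_e<1$ then perturbing the first edge upward is safe, and similarly for $v_\ell$. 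By combining suitable choices of $\epsilon>0$ along the edges of $P$, and using the fact that each internal vertex of $P$ has exactly two fractional incident edges (so perturbations cancel there), I will again produce $\x=\tfrac12(\x^++\x^-)$ with $\x^\pm\in\FM(G)$.

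The main obstacle I expect is this endpoint bookkeeping in the forest case: one has to verify that the tight/loose status of the vertex constraints at the endpoints of $P$, together with integrality of the values $0$ and $1$ for edges outside $F$, really does permit a genuine bidirectional perturbation. Once that is handled, the contradiction shows every coordinate of $\x$ is in $\{0,1\}$, and the vertex constraints then force $\x$ to be the characteristic vector of a matching, completing the proof.
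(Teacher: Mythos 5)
Your proposal is correct and is essentially the standard perturbation argument underlying this statement: the paper itself states Theorem \ref{fractional} without proof (citing Lov\'asz--Plummer) and then reuses precisely this idea---writing a fractional point as the midpoint of two feasible $\pm\varepsilon$ perturbations along an even cycle or a maximal path of the fractional subgraph---in its proof of Theorem \ref{h-representation}. The endpoint bookkeeping you flag does go through (at a maximal-path endpoint every non-path edge must have value $0$, since value $1$ would violate the vertex constraint given $x_{e_1}>0$, so that constraint is slack by $1-x_{e_1}$), and while an internal path vertex need not have \emph{exactly} two fractional incident edges, cancellation there still holds because only the two path edges are perturbed with opposite signs; note also that without the extra constraint $\sum_{e}x_e=k$ the parity of the path is irrelevant, which is why your case analysis is simpler than the one needed for $\FM_k(G)$.
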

So for bipartite graphs, we know the $V$-representation as well as a simple $H$-representation for the matching polytope. Another fundamental result for matchings in bipartite graphs is Hall's Marriage Theorem.
\begin{theorem}[Hall's Marriage Theorem]
Let $G$ be a bipartite graph on $A\sqcup B$. Then $G$ has a matching that covers $A$ if and only if for every $S\subseteq A$, the set of neighbors of $S$ (the set of vertices that are connected with at least one vertex in $S$), denoted by $\Gamma(S)$, satisfies $|\Gamma(S)|\geq|S|$.
\end{theorem}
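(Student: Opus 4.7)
The plan is to prove the theorem by induction on $|A|$. The forward direction is immediate: given a matching $M$ covering $A$, for any $S \subseteq A$ the partners of the vertices of $S$ under $M$ are $|S|$ distinct elements of $\Gamma(S)$, so $|\Gamma(S)| \geq |S|$. The substance is in the converse.

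For the inductive step, with the base case $|A|=1$ handled by $|\Gamma(A)|\geq 1$, I would split into two cases according to how tight Hall's condition is. In Case~1, every nonempty proper subset $S \subsetneq A$ satisfies the strict inequality $|\Gamma(S)| \geq |S|+1$; pick any $a \in A$ and any neighbor $b$ of $a$, and delete the pair $\{a,b\}$. In the remaining bipartite graph, Hall's condition for subsets of $A\setminus\{a\}$ is still satisfied, because removing $b$ decreases $|\Gamma(S)|$ by at most one while the slack was at least one. The inductive hypothesis produces a matching covering $A\setminus\{a\}$, which together with $\{a,b\}$ covers $A$. In Case~2, some nonempty proper $S \subsetneq A$ saturates Hall, i.e.\ $|\Gamma(S)|=|S|$. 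Apply induction to the subgraph on $S \cup \Gamma(S)$ (whose Hall condition is inherited directly, since $T \subseteq S$ forces $\Gamma(T) \subseteq \Gamma(S)$) to obtain a matching covering $S$, and apply induction to the subgraph on $(A\setminus S)\cup(B\setminus \Gamma(S))$ to cover $A\setminus S$; the union of the two matchings covers $A$.

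The main obstacle is verifying Hall's condition for the second subgraph in Case~2: for every $T \subseteq A \setminus S$ one must show $|\Gamma(T) \setminus \Gamma(S)| \geq |T|$. If this failed, then $|\Gamma(S \cup T)| \leq |\Gamma(S)| + |\Gamma(T) \setminus \Gamma(S)| < |S|+|T| = |S \cup T|$, contradicting the hypothesis on $G$. Everything else in the argument is straightforward bookkeeping. As an alternative route that fits the paper's perspective, one could bypass induction by exhibiting a point $\x \in \FM(G)$ with $\sum_{e \ni a} x_e = 1$ for every $a \in A$ (constructed, for example, via a max-flow argument) and invoking Theorem~\ref{fractional} to decompose $\x$ as a convex combination of matchings of $G$; at least one matching in the combination must then saturate every vertex of $A$.
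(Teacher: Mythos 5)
Your inductive argument is correct and complete: it is the classical Halmos--Vaughan proof of Hall's theorem, with the one genuinely delicate point (verifying Hall's condition for the residual graph on $(A\setminus S)\cup(B\setminus\Gamma(S))$ in the tight case) handled properly via the inequality $|\Gamma(S\cup T)|\le|\Gamma(S)|+|\Gamma(T)\setminus\Gamma(S)|$. Note that the paper itself does not prove this statement; it is quoted as a known result with a pointer to \cite{LP09}, so there is no in-paper proof to compare against, and your self-contained induction is a perfectly good way to supply one. One caution about your suggested alternative route through Theorem \ref{fractional}: deducing Hall from $\M(G)=\FM(G)$ requires first producing a point $\x\in\FM(G)$ with $\sum_{e\ni a}x_e=1$ for all $a\in A$, and establishing that such a fractional matching exists under Hall's condition is itself essentially max-flow/min-cut (or LP duality), i.e.\ a result of the same depth as Hall's theorem; so that route is not circular but it is not a shortcut either, and the elementary induction you carried out is the cleaner argument.
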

 For a thorough reference on matchings (including the above results), see the book \cite{LP09}.  Also, we will need the following well known result about the union of two matchings.
\begin{lemma}\label{l2}
Let $G$ be a bipartite graph on $A\sqcup B$, $V_1\subseteq A$, and $V_2\subseteq B$. If $M_1$ is a matching that covers $V_1$ and $M_2$ a matching that covers $V_2$, then $M_1\cup M_2$ contains a matching that covers $V_1\cup V_2$.
\end{lemma}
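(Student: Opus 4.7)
The plan is to analyze the union $H := M_1 \cup M_2$ as a subgraph of $G$. Since each vertex is incident to at most one edge of each matching, every vertex of $H$ has degree at most $2$, so $H$ decomposes into vertex-disjoint components that are single edges, paths, or even cycles. Moreover, in any component with two or more edges the edges must alternate strictly between $M_1 \setminus M_2$ and $M_2 \setminus M_1$, because an edge lying in $M_1 \cap M_2$ forces both its endpoints to have degree $1$ in $H$ and hence to form a component by themselves.

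The strategy is, for each component $C$, to include as a sub-matching either $M_1 \cap C$ or $M_2 \cap C$; the disjointness of components then makes the resulting union a matching contained in $M_1 \cup M_2$. For an isolated edge of $M_1 \cap M_2$ either choice yields the edge, and for an alternating even cycle either choice is a perfect matching on the cycle's vertex set, so these components are handled trivially. The interesting case is a path $P = v_0 v_1 \cdots v_m$ with $m \geq 2$, where the key observation is the following: if the unique $H$-edge at $v_0$ lies in $M_1 \setminus M_2$, then $v_0$ is incident to no $M_2$-edge of $G$ at all (any such edge would lie in $H$ and hence in $P$), and therefore $v_0 \notin V_2$; the symmetric statement holds for $M_2 \setminus M_1$ and for the endpoint $v_m$.

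Combining this with the bipartite parity of $P$ -- both endpoints lie on the same side of $A \sqcup B$ when $m$ is even and on opposite sides when $m$ is odd -- I would run a short case analysis on the matching types of $e_1, e_m$ and the parity of $m$ and verify that in every configuration one of $M_1 \cap P$, $M_2 \cap P$ covers every vertex of $(V_1 \cup V_2) \cap P$; any endpoint missed by the chosen solution falls outside $V_1 \cup V_2$ for parity reasons. The main obstacle I anticipate is keeping this case analysis tidy, and the crucial reduction is that whenever the two endpoint-edges of $P$ belong to different matchings (which forces $m$ to be even) the two endpoints share the same side of the bipartition, ruling out precisely the configurations in which both endpoints would need to be covered by opposite matchings. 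Assembling the per-component matchings then yields a matching in $M_1 \cup M_2$ covering $V_1 \cup V_2$, as required.
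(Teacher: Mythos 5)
Your proof is correct and follows essentially the same approach as the paper: both decompose $M_1\cup M_2$ into vertex-disjoint alternating paths and even cycles and select one matching class per component, with any missed path-endpoint shown to lie outside $V_1\cup V_2$. The only real difference is cosmetic: the paper orients $M_1$-edges from $A$ to $B$ and $M_2$-edges from $B$ to $A$, so every vertex of $V_1\cup V_2$ has an outgoing edge and the endpoint/parity case analysis you carry out by hand (including the even-length case where both endpoints land on the same side of the bipartition) becomes automatic.
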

\begin{proof}
Consider the digraph $D$ induced by $M_1\cup M_2$ where each edge in $M_1$ induces a directed edge from $A$ to $B$, and each edge in $M_2$ induces a directed edge from $B$ to $A$. Every vertex in $D$ has indegree and outdegree of at most 1, so $D$ is composed by disjoint directed even cycles and paths starting at a vertex in $V_1\cup V_2$ and ending at a vertex not in $V_1\cup V_2$ (since every vertex in this set has an outgoing edge). From here, it is clear we can take a matching from $M_1\cup M_2$ that covers $V_1\cup V_2$.
\end{proof} 
We can also construct polytopes from matchings of a given size $k$, which are less well-studied for $k$ other than the maximum possible size.
\begin{definition}
Let $G$ be a graph and $k\in\mathbb{N}$.  The polytope
\[\M_k(G):=\conv\{\chi(M): M \text{ is a $k$-matching of } G\}\]
in $\mathbb{R}^E$ is called the $k$-\emph{matching polytope} of $G$, and the polytope
\[\FM_k(G):= \{ \x\in\mathbb{R}^E:\sum_{e\ni v}x_e\leq 1\  \ \forall v\in V,\ x_e\geq 0\  \ \forall e\in E,\ \sum_{e\in E}x_e=k\}\]
is called the \emph{fractional} $k$-\emph{matching polytope} of $G$.  
\end{definition}
We will see in the next section that again these two polytopes are equal for bipartite graphs. Even further, we can work with matchings up to a given size and define analogously the polytopes $\M_{\leq k}(G)$ and $\FM_{\leq k}(G)$ and still obtain the same equality for bipartite graphs.

We have defined the polytope $\M_k(G)$ for a bipartite graph $G=(V,E)$ on $A\sqcup B$ as living in $\mathbb{R}^E$, but we can also think of it as a polytope in $\mathbb{R}^{m\times n}$, where $m$ and $n$ are the sizes of $A$ and $B$ respectively, in the following way:
if $A=\{a_1,\ldots,a_m\}$ and $B=\{b_1,\ldots,b_n\}$, then a matching $M$ of $G$ can be represented as a 0/1 matrix of size $m\times n$ with a one in those entries $(i,j)$ such that $\{a_i,b_j\}\in M$ and zeros elsewhere. In this case $\M_k(G)=\conv\{M\in\mathbb{R}^{m\times n}:M \text{ is a $k$-matching of } G\}$, and $\FM(G)$ is the set of matrices $X\in\mathbb{R}^{m\times n}$ such that the entries in each row and column sum to at most one, $X_{ij}=0$ when $\{a_i,b_j\}\notin E$, and all the entries of $X$ sum to $k$.

Of special interest is when $G=\K_{n,n}$. With the matrix representation for matchings, the $n$-matchings (which are the perfect matchings) of $K_{n,n}$ correspond to permutation matrices.
\begin{definition}
Let $n\in\mathbb{N}$. The polytope
\begin{align*}
B_n:=\lbrace N\in\mathbb{R}^{n\times n}: & \sum_{j=1}^n x_{ij} = 1\ \ \forall\ 1\leq i\leq n,\\
& \sum_{i=1}^n x_{ij} = 1\ \ \forall\ 1\leq j\leq n,\\
& x_{ij}\geq 0\ \ \forall\ 1\leq i,j\leq n\rbrace
\end{align*}
is called a Birkhoff polytope. An element of $B_n$ is called a $n\times n$ doubly stochastic matrix.
\end{definition}
We can see that $B_n=\FM_n(K_{n,n})$, and the equality $\M_n(K_{n,n})=\FM_n(K_{n,n})$ is the famous Birkhoff-von Neumann Theorem which says that every doubly stochastic matrix is a convex combination of permutation matrices. Also the polytopes $\M_k(\K_{m,n})$ are studied in \cite{GL09} under the name of $k$-assignment polytopes.

Using the Birkhoff polytopes as example, we transition to the concept of normality.
\begin{definition}
A polytope $P$ is normal if for all $t\in\mathbb{N}$, every integer point (a point with integer coordinates) in $tP:=\{t\mathbf{x}:\mathbf{x}\in P\}$ is equal to the sum of $t$ integer points in $P$.
\end{definition}
It is known that Birkhoff polytopes are normal, and in Section \ref{sec:normal} we generalize that by showing that in general $k$-matching polytopes of bipartite graphs are normal. Normal polytopes have connections with monoid algebras \cite{BG09} and toric varities \cite{CLS11}. A recent survey on normal polytopes is \cite{G03}.
\section{The $H$-representation of the $k$-matching polytope of a bipartite graph}\label{sec:hrepr}
In this section we extend a standard result on matchings in bipartite graphs, $\M(G)=\FM(G)$, to $k$-matchings. That is, we will prove that $\M_k(G)=\FM_k(G)$ if $G$ is a bipartite graph. This result was known and proved in \cite{GL09} for the special case of $k$-assignment polytopes, that is, when $G=K_{m,n}$.
\begin{theorem}\label{h-representation}
Let $G=(V,E)$ be a bipartite graph and $k\in\mathbb{N}$. Then $\M_k(G)=\FM_k(G)$.
\end{theorem}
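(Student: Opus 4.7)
The inclusion $\M_k(G)\subseteq\FM_k(G)$ is immediate, since the characteristic vector of any $k$-matching satisfies the defining inequalities of $\FM_k(G)$ together with $\sum_{e\in E}x_e=k$. For the reverse inclusion, given $\x\in\FM_k(G)$, I would use Theorem \ref{fractional} (which gives $\FM(G)=\M(G)$) to write $\x=\sum_i\lambda_i\chi(M_i)$ as a convex combination of (arbitrary) matchings $M_i$ of $G$ with $\lambda_i>0$ and $\sum_i\lambda_i=1$. The hypothesis $\sum_{e\in E}x_e=k$ then forces $\sum_i\lambda_i|M_i|=k$, so the task reduces to re-expressing $\x$ as a convex combination of matchings of size exactly $k$.

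The plan is an extremal swap argument. Among all convex decompositions $\x=\sum_M\lambda_M\chi(M)$ indexed by matchings of $G$, I would pick one minimizing the potential $\Phi=\sum_M\lambda_M(|M|-k)^2$; such a minimizer exists because $G$ has finitely many matchings, so the feasible weight vectors $(\lambda_M)$ range over a compact polytope and $\Phi$ is continuous. I would claim that the minimum value is $\Phi=0$, which forces every participating matching to be a $k$-matching. Suppose instead $\Phi>0$. Then $\sum_M\lambda_M|M|=k$ forces the existence of two matchings $M_i,M_j$ in the decomposition with $|M_i|<k<|M_j|$, so in particular $|M_j|-|M_i|\geq 2$.

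To contradict minimality, I would construct matchings $N,N'$ satisfying $\chi(N)+\chi(N')=\chi(M_i)+\chi(M_j)$, $|N|=|M_i|+1$, and $|N'|=|M_j|-1$, and then move weight $\mu=\min(\lambda_i,\lambda_j)$ from the pair $(M_i,M_j)$ to the pair $(N,N')$. This preserves the decomposition of $\x$ and changes $\Phi$ by $2\mu(|M_i|-|M_j|+1)\leq -2\mu<0$, contradicting the choice of minimizer. The pair $(N,N')$ is built from an augmenting path: since $G$ is bipartite, the components of $M_i\triangle M_j$ are alternating even cycles (with equal numbers of $M_i$- and $M_j$-edges) and alternating paths; because $|M_j|>|M_i|$, some component is a path $P$ with one extra $M_j$-edge, hence $M_i$-augmenting. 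I would let $N$ be $M_i$ flipped along $P$ and set $N'=(M_j\setminus P)\cup(M_i\cap P)$; the required identity $\chi(N)+\chi(N')=\chi(M_i)+\chi(M_j)$ is then a routine edge-by-edge check.

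The main obstacle I expect is verifying that $N$ and $N'$ really are matchings, especially at the endpoints of $P$. The potentially delicate point is that an endpoint $v$ of $P$ might a priori be incident to an edge of $M_i\cap M_j$, which would leave $N'$ with two edges at $v$; but maximality of $P$ as a connected component of $M_i\triangle M_j$ rules this out, because such an edge would place a second $M_j$-edge at $v$ and contradict that $M_j$ is a matching. Once this structural step is in place, termination is automatic from the extremal choice of decomposition, so no explicit iterative bookkeeping is needed.
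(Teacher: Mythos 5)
Your proof is correct, but it takes a genuinely different route from the paper. The paper never invokes Theorem \ref{fractional}; instead it proves directly that every vertex of $\FM_k(G)$ is integral, by looking at the subgraph of edges with fractional value and perturbing $\x$ by $\pm\varepsilon$ along an even cycle or along carefully chosen paths, with a case analysis (cycle; even maximal path; odd maximal path, connected or not) whose whole purpose is to preserve the extra constraint $\sum_{e\in E}x_e=k$. You instead use $\M(G)=\FM(G)$ as a black box to get some convex decomposition of $\x$ into matchings of arbitrary sizes, and then recombine it into a decomposition by $k$-matchings via an extremal argument: minimize $\Phi=\sum_M\lambda_M(|M|-k)^2$ over the compact polytope of decompositions, and if some sizes differ from $k$, pair an undersized $M_i$ with an oversized $M_j$, take an $M_i$-augmenting path $P$ in $M_i\triangle M_j$, and shift weight to $N=M_i\triangle P$, $N'=M_j\triangle P$; your computation $\Delta\Phi=2\mu(|M_i|-|M_j|+1)\leq-2\mu<0$ and the identity $\chi(N)+\chi(N')=\chi(M_i)+\chi(M_j)$ both check out, and the integrality of $k$ correctly forces $|M_j|-|M_i|\geq 2$. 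One small expository slip: the endpoint worry you raise (an endpoint of $P$ meeting an edge of $M_i\cap M_j$) actually threatens $N$, not $N'$ (in $N'$ the end edges of $P$ are removed and only interior $M_i$-edges are added), but your resolution --- such an edge would give $M_j$ two edges at that vertex --- is exactly the right fact and settles it, together with component-maximality excluding edges of $M_i\setminus M_j$ at the endpoints. Comparing the two: your argument is shorter and avoids the delicate path/cycle case analysis because the equality $\sum_e x_e=k$ is preserved automatically by the exchange $(M_i,M_j)\mapsto(N,N')$, at the cost of relying on Theorem \ref{fractional}; the paper's perturbation proof is self-contained, exhibits the vertex structure of $\FM_k(G)$ explicitly, and, as noted in the paper's observation, transfers verbatim to the $\M_{\leq k}$ and $\M_{\geq k}$ variants (your potential-function argument could be adapted to those too, with a one-sided penalty, but that would need to be said).
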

\begin{proof}
Clearly $\M_k(G)\subseteq\FM_k(G)$ and $\FM_k(G)\cap\mathbb{Z}^E\subseteq\M_k(G)$. So it is enough to prove that every vertex of $\FM_k(G)$ has integer coordinates. We do so by showing that if $\x$ is a non-integer point of $\FM_k(G)$, then it can be written as $\x=\frac{1}{2}(\x'+\x'')$ where $\x',\x''\in\FM_k(G)\setminus\{\x\}$, and thus it cannot be a vertex of $\FM_k(G)$. (We construct $\x'$ and $\x''$ by properly modifying $\x$; this is the same idea behind the proof $\M(G)=\FM(G)$, but now we need to be careful with the extra condition $\sum_{e\in E}x_e=k$.)

Let $\x$ be a point of $\FM_k(G)$ that is not integer, and consider the subgraph $H:=(V,F)$ where $F:=\{e\in E: x_e\notin\{0,1\}\}$. We divide the proof by cases.

\textbf{Case 1}: $H$ has a cycle.

If $H$ has a cycle, then it has to be an even cycle since $G$ is bipartite. Let $e_1,\ldots,e_m$ be the edges that appear in the cycle in that precise order.

Define $\varepsilon:=\min\{x_{e_1},\ldots,x_{e_m},1-x_{e_1},\ldots,1-x_{e_m}\}$. Let $\x'=(x'_e)_{e\in E}$ and $\x''=(x''_e)_{e\in E}$ where
\[x'_e=
\begin{cases}
x_e+\varepsilon & \text{if } e=e_i \text{ for some } i\in[m] \text{ odd},\\
x_e-\varepsilon & \text{if } e=e_i \text{ for some } i\in[m] \text{ even}, \tag{$\ast$}\\
x_e & \text{otherwise}, 
\end{cases} \]
and
\[x''_e=
\begin{cases}
x_e-\varepsilon & \text{if } e=e_i \text{ for some } i\in[m] \text{ odd},\\
x_e+\varepsilon & \text{if } e=e_i \text{ for some } i\in[m] \text{ even},\tag{$\ast\ast$}\\
x_e & \text{otherwise}. 
\end{cases} \]
Then $\x',\x''\in\FM_k(G)\setminus\{\x\}$ and $\x=\frac{1}{2}(\x'+\x'')$.

\textbf{Case 2}: $H$ has no cycles.

Let $P$ be a maximal path of $H$ with consecutive vertices $v_1,v_2,\ldots,v_m,v_{m+1}$ and edges $e_1=\{v_1,v_2\},\ldots,e_m=\{v_m,v_{m+1}\}$. Then $x_e=0$ if $e$ is an edge of $G$ incident to $v_1$ different from $e_1$. Indeed, it cannot be equal to 1 due to the inequality $\sum_{e\ni v_1}x_e\leq 1$, and it cannot be strictly between 0 and 1 since otherwise the path could be extended. A similar analysis can be done for $v_{m+1}$. Thus $\sum_{e\ni v_1}x_e=x_{e_1}<1$ and $\sum_{e\ni v_{m+1}}x_e=x_{e_m}<1$.

Furthermore, if $P$ is of even length, take $\varepsilon:=\min\{x_{e_1},\ldots,x_{e_m},1-x_{e_1},\ldots,1-x_{e_m}\}$. Define $\x'$ and $\x''$ as in ($\ast$) and ($\ast\ast$). Then $\x',\x''\in\FM_k(G)\setminus\{\x\}$ and $\x=\frac{1}{2}(\x'+\x'')$. In conclusion, if we can find a maximal path of even length in $H$ we are done.

\textbf{Subcase 2.1}: $H$ has no cycles, but it is connected.

If $P$ is of even length, we are done, so suppose $P$ has odd length. We analize the cases $H=P$ and $H\neq P$ separately.

Suppose $H=P$. If $\sum_{e\ni v}x_e=1$ for every interior vertex $v$ of $H$, then
\begin{align*}
k &= \sum_{e\in E}x_e\\
&= \sum_{e\in H}x_e + \text{ some integer}\\
&= (x_{e_1}+x_{e_2})+(x_{e_3}+x_{e_4})+\cdots+(x_{e_{m-2}}+x_{e_{m-1}})+x_{e_m}+ \text{ some integer}\\ 
&= 1+1+\cdots + 1 + x_{e_m}+ \text{ some integer}\\
&= x_{e_m}+ \text{ some integer}
\end{align*} 
which is a contradiction since $x_{e_m}$ is not an integer. So there is an interior vertex $v$ of $H$ such that $\sum_{e\ni v}x_e<1$. This vertex $v$ divides $H$ into two subpaths and one of them has to be of even length. Let's called this subpath $P_0$, and suppose without loss of generality that its consecutive edges are $e_1,\ldots,e_n$ ($n<m$). Take $\varepsilon:=\min\{x_{e_1},\ldots,x_{e_n},1-x_{e_1},\ldots,1-x_{e_{n-1}},1-x_{e_n}-x_{e_{n+1}}\}$. Now define $\x'$ and $\x''$ as in ($\ast$) and ($\ast\ast$) with $m$ replaced by $n$. Then $\x',\x''\in\FM_k(G)\setminus\{\x\}$ and $\x=\frac{1}{2}(\x'+\x'')$.

Now suppose $H\neq P$. Then there is an edge $e'_1$ that goes from an interior vertex $v_i$ of $P$ (it cannot be an end-vertex by the maximality of $P$) to a vertex of $H$ not belonging to $P$ (since there is no cycles in $H$). Extend the path $e_1,\ldots,e_{i-1},e'_1$ as much as possible to a maximal path $e_1,\ldots,e_{i-1},e'_1,\ldots,e'_j$. Then one of the maximal paths $e_1,\ldots,e_{i-1},e'_1,\ldots,e'_j$ or $e_m,\ldots,e_i,e'_1,\ldots,e'_j$ is of even length, and we are done.

\textbf{Subcase 2.2}: $H$ has no cycles and is not connected.
 
Finally, if $H$ is disconnected, then it has two maximal paths $P$ and $P'$ with no vertex in common. If one of them is of even length, we are done, so suppose both of them are of odd length. Let $e_1,\ldots,e_m$ be the consecutive edges of $P$ and $e'_1,\ldots,e'_t$ the consecutive edges of $P'$. Define $\varepsilon:=\min\{x_{e_1},\ldots,x_{e_m},x_{e'_1},\ldots,x_{e'_t},1-x_{e_1},\ldots,1-x_{e_m},1-x_{e'_1},\ldots,1-x_{e'_t}\}$. Let $\x'=(x'_e)_{e\in E}$ and $\x''=(x''_e)_{e\in E}$ where
\[x'_e=
\begin{cases}
x_e+\varepsilon & \text{if } e=e_i \text{ for some } i\in[m] \text{ odd}\\
x_e-\varepsilon & \text{if } e=e_i \text{ for some } i\in[m] \text{ even}\\
x_e-\varepsilon & \text{if } e=e'_i \text{ for some } i\in[t] \text{ odd}\\
x_e+\varepsilon & \text{if } e=e'_i \text{ for some } i\in[t] \text{ even}\\
x_e & \text{otherwise} 
\end{cases} \]
and
\[x''_e=
\begin{cases}
x_e-\varepsilon & \text{if } e=e_i \text{ for some } i\in[m] \text{ odd}\\
x_e+\varepsilon & \text{if } e=e_i \text{ for some } i\in[m] \text{ even}\\
x_e+\varepsilon & \text{if } e=e'_i \text{ for some } i\in[t] \text{ odd}\\
x_e-\varepsilon & \text{if } e=e'_i \text{ for some } i\in[t] \text{ even}\\
x_e & \text{otherwise} 
\end{cases}. \]
Then $\x',\x''\in\FM_k(G)\setminus\{\x\}$ and $\x=\frac{1}{2}(\x'+\x'')$.\\
\end{proof}
\begin{observation}
The same proof also works to show that $\M_{\leq k}(G)=\FM_{\leq k}(G)$ for bipartite graphs, where \[\M_{\leq k}(G):=\conv\{\chi(M): M \text{ is a matching of } G \text{ of size at most $k$}\}\]
and
\[\FM_{\leq k}(G):= \{ \x\in\mathbb{R}^E:\sum_{e\ni v}x_e\leq 1\  \ \forall v\in V,\ x_e\geq 0\  \ \forall e\in E,\ \sum_{e\in E}x_e\leq k\}.\]
A similar result holds for $\geq$ instead of $\leq$.
\end{observation}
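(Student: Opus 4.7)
The plan is to adapt the proof of Theorem~\ref{h-representation}. Since $\M_{\leq k}(G) \subseteq \FM_{\leq k}(G)$ is trivial and $\FM_{\leq k}(G) \cap \ZZ^E \subseteq \M_{\leq k}(G)$ is immediate, it suffices to show that every vertex of $\FM_{\leq k}(G)$ is integer-valued, and I would do this by showing that every non-integer point $\x \in \FM_{\leq k}(G)$ can be written as $\frac{1}{2}(\x' + \x'')$ with $\x', \x'' \in \FM_{\leq k}(G) \setminus \{\x\}$. The key observation is that replacing the equality $\sum_{e \in E} x_e = k$ by the inequality $\sum_{e \in E} x_e \leq k$ only enlarges the feasible region, so all of the original perturbations remain admissible and new ones (those shifting the total mass by a small amount) become available whenever there is slack in the sum constraint.

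I would split into two cases according to whether $\sum_{e} x_e = k$ or $\sum_{e} x_e < k$. In the first case, $\x$ lies in $\FM_k(G)$, and Theorem~\ref{h-representation} itself produces $\x', \x'' \in \FM_k(G) \setminus \{\x\}$ with $\x = \frac{1}{2}(\x' + \x'')$; since $\FM_k(G) \subseteq \FM_{\leq k}(G)$ we are done. In the second case, I would form the subgraph $H = (V,F)$ with $F = \{e : x_e \in (0,1)\}$ exactly as before. Non-integrality of $\x$ forces $F \neq \emptyset$, so $H$ contains either an even cycle (since $G$ is bipartite) or, failing that, a maximal path $e_1, \ldots, e_m$. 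A cycle supports the original alternating perturbation with no change to $\sum_{e} x_e$. For a maximal path, the endpoint argument from the original proof still gives $x_{e_1}, x_{e_m} < 1$; if $m$ is even the alternating perturbation preserves the total, while if $m$ is odd the total shifts by $\pm \varepsilon$, and this is admissible provided I also require $\varepsilon < k - \sum_{e} x_e$ alongside the usual bounds $\varepsilon \leq \min_i (x_{e_i}, 1 - x_{e_i})$.

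The conceptual simplification is that Subcases 2.1 and 2.2 of the original proof collapse into this single easy step: the intricate devices used there (locating an interior vertex where the row/column sum is strictly below $1$ in order to split an odd path, or pairing two odd paths so that their sum-changes cancel) were contrivances to preserve the equality $\sum_e x_e = k$, and they become unnecessary once slack is available. The only mildly delicate point, and the closest thing to an obstacle, is coordinating the choice of $\varepsilon$ against the endpoint inequalities, the non-negativity bounds, and the slack in the total, but this reduces to taking a minimum of finitely many positive quantities. The proof for $\M_{\geq k}(G) = \FM_{\geq k}(G)$ runs symmetrically, with the case $\sum_e x_e = k$ reducing to Theorem~\ref{h-representation} and the case $\sum_e x_e > k$ handled by the same perturbation scheme, with slack now on the lower side.
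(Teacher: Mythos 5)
Your argument is correct and rests on the same vertex-perturbation technique that the paper uses for Theorem~\ref{h-representation}; the paper itself offers no separate argument for this observation beyond asserting that the same proof goes through. Where you differ is in the organization: rather than rerunning the whole case analysis with $\sum_{e}x_e\leq k$ in place of $\sum_{e}x_e=k$, you split on whether the sum constraint is tight, dispatch the tight case by noting $\FM_k(G)\subseteq\FM_{\leq k}(G)$ (strictly speaking it is the \emph{proof} of Theorem~\ref{h-representation}, not its statement, that supplies the decomposition $\x=\frac{1}{2}(\x'+\x'')$ there), and observe that when there is slack the delicate machinery of Subcases 2.1 and 2.2 --- locating an interior vertex with vertex-sum below $1$ to split an odd path, or pairing two odd paths so their mass changes cancel --- becomes unnecessary, since a single odd alternating path may shift the total by $\pm\varepsilon$ once $\varepsilon\leq k-\sum_{e}x_e$ is added to the list of constraints on $\varepsilon$. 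This is a modest but genuine simplification, and it isolates exactly where the equality constraint causes difficulty. The endpoint computation $\sum_{e\ni v_1}x_e=x_{e_1}<1$ carries over verbatim, so the perturbed points remain feasible at the path's ends, and the $\geq$ version is indeed symmetric with the slack on the other side.
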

\section{The normality of $k$-matching polytopes of bipartite graphs}\label{sec:normal}
The objective of this section is to prove that $k$-matching polytopes of bipartite graph are normal. As explained in the preliminaries, we will think of a matching in a bipartite graph not only as a set of edges but also as a matrix. Our starting point is the following result.
\begin{lemma}\label{lemmasubgraph}
If $G$ is a bipartite graph such that $\M_k(G)$ is normal, then so is $\M_k(H)$ for every subgraph $H$ of $G$.
\end{lemma}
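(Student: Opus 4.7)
The plan is to realize $\M_k(H)$ as a face of $\M_k(G)$ via the natural inclusion $\mathbb{R}^{E(H)} \hookrightarrow \mathbb{R}^{E(G)}$ (zero-padding on coordinates outside $E(H)$), and then to invoke the standard fact that every face of a normal polytope is normal. The proof thus splits into two steps: exhibit the face, and transfer the normality.

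For the first step, since every vertex of $\M_k(G)$ is a $0/1$ vector, the inequality $x_e \geq 0$ is valid on $\M_k(G)$ for each $e \in E(G)$, so $F_e := \{\x \in \M_k(G) : x_e = 0\}$ is a face of $\M_k(G)$. Intersecting these faces over $e \in E(G)\setminus E(H)$ produces a face $F \subseteq \M_k(G)$ whose vertices are precisely those $\chi(M)$ with $M$ a $k$-matching of $G$ satisfying $M \subseteq E(H)$, i.e., the characteristic vectors of $k$-matchings of $H$. Hence $F$ coincides with the image of $\M_k(H)$ under the embedding, and normality of $\M_k(H)$ in $\mathbb{R}^{E(H)}$ is equivalent to normality of $F$ in $\mathbb{R}^{E(G)}$ (the integer points and their $t$-dilates correspond under the embedding).

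For the second step, write $F = \{\x \in \M_k(G) : \ell(\x) = c\}$ for some linear functional $\ell$ with $\ell(\x) \geq c$ throughout $\M_k(G)$. Given an integer point $\y \in tF \subseteq t\M_k(G)$, normality of $\M_k(G)$ yields integer points $\z_1, \ldots, \z_t \in \M_k(G)$ with $\y = \z_1 + \cdots + \z_t$. Summing $\ell(\z_i) \geq c$ gives $tc \leq \sum_i \ell(\z_i) = \ell(\y) = tc$, so each inequality is tight and each $\z_i$ lies in $F$. This decomposes $\y$ into $t$ integer points of $F$, establishing normality of $F$ and therefore of $\M_k(H)$.

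There is no serious obstacle: the argument is the textbook observation that faces inherit normality. The only point that needs mild care is the identification of $\M_k(H)$ with the face of $\M_k(G)$ cut out by the vanishing of coordinates outside $E(H)$, which is immediate from the description of the vertices of $\M_k(G)$ as indicator vectors of $k$-matchings.
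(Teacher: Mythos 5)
Your proposal is correct and in essence the same as the paper's argument: both decompose an integer point of $t\M_k(H)$, viewed inside $t\M_k(G)$ via zero-padding, using the normality of $\M_k(G)$, and then observe that the summands must vanish on the coordinates indexed by $E(G)\setminus E(H)$, hence are $k$-matchings of $H$. The paper gets this vanishing directly from nonnegativity of the entries, while you package the identical observation as the tightness argument for the face cut out by $\sum_{e\in E(G)\setminus E(H)}x_e=0$ (faces of normal polytopes are normal); this is a difference of presentation, not of substance.
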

\begin{proof}
By adding, if necessary, vertices with no incident edges, we can suppose without loss of generality that $H$ and $G$ have the same vertices. If $N$ is an integer point of $t\M_k(H)$, $N$ is also an integer point of $t\M_k(G)$. Since $\M_k(G)$ is normal, $N=M_1+\cdots+M_t$ where each $M_i$ is a $k$-matching of $G$. Since $N$ and the $M_i$'s have nonnegative entries, every time that $N$ has a zero in the $(r,s)$-entry so do the $M_i$'s. Thus the $M_i$'s are $k$-matchings of $H$, and therefore $\M_k(H)$ is also normal.
\end{proof}
Since every bipartite graph is a subgraph of the complete bipartite graph $\K_{n,n}$ for some $n\in\mathbb{Z}^+$, it is enough to prove our main result for the polytopes $\M_k(\K_{n,n})$, and that is what we are going to do.
\begin{definition}
Let $G$ be a bipartite graph with vertex sets $A=\{a_1,\cdots,a_m\}$ and $B=\{b_1,\cdots,b_n\}$. If $N\in t\M_k(G)$ ($t\in\mathbb{Z}^+$) is an integer point, then the graph $H$ induced by $N$ is the graph with the same vertices of $G$ and such that $\{a_i,b_j\}$ is an edge of $H$ if and only if the entry $(i,j)$ of $N$ is nonzero. 
\end{definition}
For completeness we present a proof that Birkhoff polytopes $B_n=\M_n(\K_{n,n})$ are normal (Theorem \ref{t1}). This is a known result in matching theory; nonetheless it is difficult to find a reference for its proof. We begin by extracting a lemma which we will use later on.
\begin{lemma}\label{lemma1matchings}
Let $N\in tB_n$ be a an integer point. Then the graph induced by $N$ has a perfect matching $M$, and thus $N-M\in(t-1)B_n$.  
\end{lemma}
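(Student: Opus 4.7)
The plan is to verify Hall's marriage condition on the graph $H$ induced by $N$ and then subtract the resulting perfect matching entrywise.

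First I would observe that an integer point $N\in tB_n$ is exactly an $n\times n$ nonnegative integer matrix with every row sum and every column sum equal to $t$. Let $H$ be the bipartite graph induced by $N$, so $\{a_i,b_j\}$ is an edge of $H$ precisely when $N_{ij}\geq 1$.

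Next I would check Hall's condition for $H$. Fix any $S\subseteq A$. Summing all entries of $N$ lying in the rows indexed by $S$ gives $t|S|$ (since each such row sums to $t$). Every nonzero entry in those rows sits in a column whose vertex lies in $\Gamma(S)$, so the same sum is bounded above by the total mass in the columns of $\Gamma(S)$, which equals $t|\Gamma(S)|$. Hence $t|S|\leq t|\Gamma(S)|$, i.e.\ $|\Gamma(S)|\geq |S|$. By Hall's Marriage Theorem, $H$ admits a matching covering $A$, and since $|A|=|B|=n$ this is a perfect matching $M$.

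Finally I would verify the last clause. For every edge $\{a_i,b_j\}\in M$ we have $N_{ij}\geq 1$ by the definition of $H$, so the matrix $N-\chi(M)$ (viewed in $\mathbb{R}^{n\times n}$) has nonnegative integer entries. Because $M$ is a perfect matching, subtracting $\chi(M)$ removes exactly one unit from each row sum and each column sum, leaving all row and column sums equal to $t-1$. Thus $N-M\in (t-1)B_n$. There is no real obstacle here: the only thing to be careful about is to use that $M$ is \emph{perfect} (and not just any matching in $H$) to guarantee that every row and column sum drops by exactly one, which is why we need Hall's condition to give a matching saturating \emph{all} vertices of $A$ (and equivalently $B$, since the sides have equal cardinality and $t|A|=t|B|$).
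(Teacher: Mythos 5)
Your proof is correct and follows essentially the same route as the paper: both verify Hall's condition for the induced graph by counting row and column sums (you do it directly, the paper argues by contradiction via a forced zero submatrix), then subtract the resulting perfect matching to drop every row and column sum by exactly one. No gaps; the direct verification of $t|S|\leq t|\Gamma(S)|$ is a clean equivalent of the paper's argument.
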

\begin{proof}
Let $H$ be the graph induced by $N$, and suppose it has no perfect matching. By Hall's Marriage Theorem, there is a subset $S$ of $A$ such that $|\Gamma(S)|<|S|$, where $\Gamma(S)$ is the set of vertices in $B$ that are connected in $H$ by an edge to at least one element in $S$. Let $k=|S|$ and, without loss of generality, assume $S=\{a_1,\ldots,a_k\}$ and $B\smallsetminus\Gamma(S)=\{b_1,\ldots,b_m\}$ where $m=n-|\Gamma(S)|$. From $|\Gamma(S)|<|S|$, it follows that $m=n-|\Gamma(S)|>n-|S|$, so $m\geq n-k+1$. If $b_j\notin\Gamma(S)$, then the entries $N_{1j},\ldots,N_{kj}$ of $N$ are zero. Thus in the upper-left corner of $N$, there is a submatrix of zeros of size $k\times(n-k+1)$.

Since $N\in tB_n$, the entries of $N$ in each row and column have to sum up to $t$. This means that the entries of $N$ in the upper-right $k\times(k-1)$-submatrix of $N$ must sum up to $kt$, but on the other hand, the entries in the right $n\times(k-1)$ submatrix of $N$ must sum up to $(k-1)t$. This is a contradiction.       
\end{proof}
\begin{theorem}\label{t1}
The Birkhoff polytope $B_n$
is normal.
\end{theorem}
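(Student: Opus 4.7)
The plan is to prove Theorem \ref{t1} by induction on the dilation factor $t$, using Lemma \ref{lemma1matchings} as the key tool that drives the induction.

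For the base case $t = 1$, I would observe that an integer point of $B_n$ is a $0/1$-matrix whose rows and columns sum to $1$, hence a permutation matrix, which corresponds to a perfect matching of $K_{n,n}$. So such a point is trivially a sum of one integer point of $B_n = \M_n(K_{n,n})$.

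For the inductive step, suppose the theorem holds for dilation $t - 1$ and let $N$ be an integer point of $tB_n$. The idea is to peel off one perfect matching at a time. By Lemma \ref{lemma1matchings}, the graph induced by $N$ contains a perfect matching $M$, and the matrix $N - M$ belongs to $(t-1)B_n$. Since both $N$ and $M$ have integer entries (and $M$ only has $1$'s in positions where $N$ has a nonzero, hence a positive integer, entry), the matrix $N - M$ is a nonnegative integer point of $(t-1)B_n$. By the inductive hypothesis, write $N - M = M_1 + \cdots + M_{t-1}$ where each $M_i$ is an integer point of $B_n$, i.e., a permutation matrix. Then $N = M + M_1 + \cdots + M_{t-1}$ expresses $N$ as a sum of $t$ integer points of $B_n$, completing the induction.

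The main obstacle is entirely concentrated in Lemma \ref{lemma1matchings}: once we know that every integer point of $tB_n$ admits a perfect matching in its support, the induction above is essentially bookkeeping. In particular, the argument requires no additional combinatorial ideas beyond Hall's theorem (already used to prove the lemma) and the observation that subtracting a perfect matching preserves the doubly stochastic structure while decrementing the row/column sums uniformly by $1$.
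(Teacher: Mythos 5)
Your proof is correct and follows essentially the same route as the paper: induction on $t$, with Lemma \ref{lemma1matchings} used to peel off a perfect matching $M$ so that $N-M\in(t-1)B_n$ and the inductive hypothesis applies. The extra remarks on the base case and the integrality of $N-M$ are fine but add nothing beyond the paper's argument.
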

\begin{proof}
We prove by induction on $t$ that every integer point in $tB_n$ is the sum of $t$ integer points in $B_n$. The result clearly holds for $t=1$. So suppose it is true for $t-1$, and we will prove that it also holds for $t$.

Let $N$ be an integer point of $tB_n$. By Lemma \ref{lemma1matchings}, the graph induced by $N$ has a perfect matching $M$, and thus $N-M\in(t-1)B_n$ which by induction can be written as the sum of $t-1$ integer points of $B_n$. Notice that $M$ is an integer point of $B_n$, so $N$ is the sum of $t$ integer points of $B_n$.
\end{proof}
The plan now is to generalize both Lemma \ref{lemma1matchings} and Theorem \ref{t1} to the case of the polytopes $\M_k(\K_{n,n})$. For that, we need the next auxiliary result.
\begin{lemma}\label{l3}
Let $G$ be a bipartite graph on $A\sqcup B$. Given $A'\subseteq A$ with $|A'|=r$, $B'\subseteq B$ with $|B'|=c$, and a nonnegative integer $k\leq r+c$, suppose:
\begin{itemize}
\item there is a $k$-matching $M_1$ of $G$ covering $A'$
\item there is a $k$-matching $M_2$ of $G$ covering $B'$
\item there is a $(r+c-k)$-matching $M_3$  of the subgraph induced by $A'\sqcup B'$. 
\end{itemize}
Then $M_1\cup M_2\cup M_3$ contains a $k$-matching of $G$ covering $A'\sqcup B'$.
\end{lemma}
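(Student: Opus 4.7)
The plan is to start from a matching covering $A' \cup B'$ extracted from $M_1 \cup M_2$ via Lemma \ref{l2}, and then to use $M_3$ to correct its size to exactly $k$. To begin, I would apply Lemma \ref{l2} to $M_1$ (covering $A'$) and $M_2$ (covering $B'$) to obtain a matching $M^{\ast} \subseteq M_1 \cup M_2$ covering $A' \cup B'$. Call an edge of $M^{\ast}$ \emph{internal} if both endpoints lie in $A' \cup B'$ and \emph{external} if exactly one does; since $M^{\ast}$ arises from the alternating-path/cycle construction in the proof of Lemma \ref{l2}, we may assume no $M^{\ast}$-edge has both endpoints outside $A' \cup B'$. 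If $i$ denotes the number of internal edges, then $2i + (\text{external count}) = r + c$, and hence $|M^{\ast}| = r + c - i$.

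A $k$-matching covering $A' \cup B'$ with no ``wasted'' edges must have exactly $i = r + c - k$ internal edges. If $M^{\ast}$ already satisfies this, we are done. Otherwise I would bring in $M_3$, which is itself an $(r+c-k)$-matching of $G[A' \sqcup B']$ and therefore supplies exactly the desired internal count. Consider the union of the internal edges of $M^{\ast}$ with $M_3$, viewed as a subgraph of $G[A' \sqcup B']$: since both are matchings, it has maximum degree $2$ and decomposes into alternating paths and even cycles. Along each alternating component one can \emph{swap} edges of $M^{\ast}$ for edges of $M_3$ (and conversely); combined with a re-selection of external $M_1 \cup M_2$-edges at the affected endpoints, each swap modifies the internal count of the matching by one while keeping it inside $M_1 \cup M_2 \cup M_3$. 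A suitable sequence of swaps then brings $i$ to $r + c - k$, yielding the required $k$-matching.

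The main obstacle is showing that each swap can be performed without losing coverage of $A' \cup B'$. When an internal edge $\{a,b\}$ of $M^{\ast}$ is replaced by edges of $M_3$, or two external edges of $M^{\ast}$ are replaced by a single internal $M_3$-edge, the endpoints that become uncovered must be picked up by other edges from $M_1 \cup M_2$. The needed flexibility is exactly what the hypotheses provide: $M_1$ covers all of $A'$, $M_2$ covers all of $B'$, and $M_3$ has the precise size $r+c-k$ that matches the internal-count discrepancy between $|M^{\ast}|$ and~$k$. I expect the delicate part of the write-up to be a case analysis, following the digraph decomposition in Lemma \ref{l2}'s proof, verifying that the alternating components of the relevant auxiliary subgraph always admit enough compatible swaps to land exactly on the target internal count.
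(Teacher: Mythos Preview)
Your plan has a concrete gap. By discarding from $M^\ast$ every edge with both endpoints outside $A'\cup B'$ and then computing $|M^\ast|=r+c-i$, you are implicitly searching only for $k$-matchings in which every edge meets $A'\cup B'$. But the lemma does not require this, and in some instances the desired $k$-matching \emph{must} contain a ``wasted'' edge disjoint from $A'\cup B'$. Take $A'=\{a_1,a_2\}$, $B'=\{b_1,b_2\}$ (so $r=c=2$), $k=3$, and
\[
M_1=\{a_1b_1,\,a_2b_2,\,a_3b_3\},\qquad M_2=\{a_1b_1,\,a_2b_2,\,a_4b_4\},\qquad M_3=\{a_1b_2\}.
\]
Here $M_1\cup M_2\cup M_3=\{a_1b_1,a_2b_2,a_3b_3,a_4b_4,a_1b_2\}$, and the only $3$-matchings in this set covering $A'\cup B'$ are $\{a_1b_1,a_2b_2,a_3b_3\}$ and $\{a_1b_1,a_2b_2,a_4b_4\}$, each containing a fully external edge. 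Your swap scheme trades internal edges of $M^\ast$ for edges of $M_3$ (all internal) and re-selects external edges of $M_1\cup M_2$ at the affected endpoints; it can never manufacture an edge disjoint from $A'\cup B'$, and indeed here there are \emph{no} $M_1\cup M_2$-edges at $A'\cup B'$ other than $a_1b_1$ and $a_2b_2$, so the swaps you describe are simply unavailable.

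The paper's proof avoids this in two ways. In the base case $r+c=k$ it applies Lemma~\ref{l2} not with $A',B'$ but with the \emph{full} covered sets $V_1=\{a\in A: a\text{ is }M_1\text{-covered}\}$ and $V_2=\{b\in B: b\text{ is }M_2\text{-covered}\}$; the resulting matching has size at least $|V_1|=k$, and one then prunes down to exactly $k$ while retaining coverage of $A'\sqcup B'$. For $r+c>k$ the paper does not swap at all: it inducts on $r+c-k$, peeling off an internal edge $e=\{a,b\}$ of $M_1$ (or $M_2$) together with the conflicting edges of $M_2$ and $M_3$, recursing on the smaller instance, and restoring $e$ afterwards; if neither $M_1$ nor $M_2$ has an internal edge, a direct disjoint-union construction from pieces of $M_1,M_2,M_3$ finishes immediately. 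Your approach might be repairable by allowing fully external edges back into the count (writing $|M^\ast|=r+c-i+f$) and arguing that enough such edges are available in $M_1\cup M_2$, but as written the argument does not go through.
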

\begin{proof}
We prove this by induction on $r+c-k$. If $r+c-k=0$, we use Lemma \ref{l2}: if $V_1$ is the set of all vertices in $A$ that are covered by $M_1$, and $V_2$ is the set of all vertices in $B$ that are covered by $M_2$, then $M_1\cup M_2$ contains a matching $M$ that covers $V_1\cup V_2$. This matching $M$ contains at least $k$ edges since $|V_1|=|M_1|=k$, and it uses at most $k$ edges to cover $A'\sqcup B'$ since $k=r+c$. So we can remove edges from $M$ to obtain a $k$-matching covering $A'\sqcup B'$. Suppose now that $r+c-k>0$ and that the result holds for smaller values.

\textbf{Case 1}: $M_1$ or $M_2$ contains an edge from $A'$ to $B'$. Without loss of generality we assume the edge is in $M_1$. If $M_1$ contains the edge $e=\{a,b\}$ where $a\in A'$ and $b\in B'$, define $M'_1:=M_1\setminus\{e\}$, $M'_2:=M_2\setminus\{e'\}$ where $e'$ is the edge of $M_2$ incident to $b$, and $M'_3$ as the set obtained from $M_3$ by removing from this all the edges incident to $a$ and $b$; in case that there is no edge in $M_3$ incident to $a$ or $b$, remove any arbitrary edge of $M_3$. Thus $M'_3$ has one or two elements less than $M_3$.

If $|M_3\setminus M'_3|=1$, we have
\begin{itemize}
\item a matching $M'_1$ of size $k-1$ covering $A'\setminus\{a\}$
\item a matching $M'_2$ of size $k-1$ covering $B'\setminus\{b\}$
\item a matching $M'_3$ of size $r+c-k-1=(r-1)+(c-1)-(k-1)$ in the subgraph induced by $(A'\setminus\{a\})\sqcup(B'\setminus\{b\})$, 
\end{itemize}
so by the induction hypothesis $M'_1\cup M'_2\cup M'_3$ contains a matching of size $k-1$ covering $(A'\setminus\{a\})\sqcup(B'\setminus\{b\})$. Add $e$ to this matching, and the result follows.

If $|M_3\setminus M'_3|=2$, we have two edges in $M_3$ of the form $e_1=\{a,b'\}$ and $e_2=\{a',b\}$ with $a\neq a'\in A'$ and $b\neq b'\in B$. Remove from $M'_1$ the edge that is incident to $a'$ and from $M'_2$ the edge that is incident to $b'$. Call this new sets $M''_1$ and $M''_2$. We have then
\begin{itemize}
\item a matching $M''_1$ of size $k-2$ covering $A'\setminus\{a,a'\}$
\item a matching $M''_2$ of size $k-2$ covering $B'\setminus\{b,b'\}$
\item a matching $M'_3$ of size $r+c-k-2=(r-2)+(c-2)-(k-2)$ in the subgraph induced by $(A'\setminus\{a,a'\})\sqcup(B'\setminus\{b,b'\})$, 
\end{itemize}
so by the induction hypothesis $M''_1\cup M''_2\cup M'_3$ contains a matching of size $k-2$ covering $(A'\setminus\{a,a'\})\sqcup (B'\setminus\{b,b'\})$. Add $e_1$ and $e_2$ to this matching, and the result follows.

\textbf{Case 2}: Neither $M_1$ nor $M_2$ has an edge from $A'$ to $B'$. Let $A''$ be the vertices in $A'$ that are covered by $M_3$, and define $B''$ similarly. Let $M'_1:=\{e\in M_1: e \text{ is incident to a vertex in } A'\setminus A''\}$, and define $M'_2$ similarly. Notice that $|M'_1|=|A'|-|A''|=r-(r+c-k)=k-c$, and similarly, $|M'_2|=k-r$. Then $M'_1\sqcup M'_2\sqcup M_3$ is a matching of $G$ of size $|M'_1|+|M'_2|+|M_3|=(k-c)+(k-r)+(r+c-k)=k$ covering $A'\sqcup B'$ (no induction is required in this case). 
\end{proof}
To make it easier to understand the following proofs, let us consider the elements in $\M_k(\K_{n,n})$ and $t\M_k(\K_{n,n})$ as matrices. By Theorem \ref{h-representation},
\begin{align*}
\M_k(\K_{n,n}):=\{X\in\mathbb{R}^{n\times n}: &\sum_{j=1}^{n}x_{ij}\leq 1\  \ \forall i\in [n],\ \sum_{i=1}^{n}x_{ij}\leq 1\  \ \forall j\in [n],\\
& x_{ij}\geq 0\  \ \forall i,j\in [n],\\
&\sum_{i,j}x_{ij}=k\}.
\end{align*}
Since $X\in t\M_k(\K_{n,n})$ if and only if $\frac{1}{t}X\in\M_k(\K_{n,n})$, we have
\begin{align*}
t\M_k(\K_{n,n}):=\{X\in\mathbb{R}^{n\times n}: &\sum_{j=1}^{n}x_{ij}\leq t\  \ \forall i\in [n],\ \sum_{i=1}^{n}x_{ij}\leq t\  \ \forall j\in [n],\\
& x_{ij}\geq 0\  \ \forall i,j\in [n],\\
&\sum_{i,j}x_{ij}=tk\}.
\end{align*} 
\begin{lemma}\label{l1}
Let $N\in t\M_k(\K_{n,n})$ be an integer point. Then the graph induced by $N$ has a $k$-matching $M$ such that $N-M\in(t-1)\M_k(\K_{n,n})$.
\end{lemma}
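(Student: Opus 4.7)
The plan is to extract from $N$ the rows and columns that must be saturated by any valid $M$ and then assemble $M$ via Lemma~\ref{l3}. Let $A'\subseteq A$ be the set of rows of $N$ with sum exactly $t$ and $B'\subseteq B$ the columns of $N$ with sum exactly $t$; write $r=|A'|$ and $c=|B'|$. From $rt\leq\sum_{i,j}N_{ij}=tk$ we get $r,c\leq k$. Let $H$ denote the graph induced by $N$. The key reduction is that a $k$-matching $M$ of $H$ yields $N-M\in(t-1)\M_k(\K_{n,n})$ if and only if $M$ covers $A'\cup B'$: rows (resp.\ columns) outside $A'\cup B'$ have integer sums in $N$ that are already at most $t-1$ and remain so after subtracting a $0/1$ matrix, while those inside $A'\cup B'$ have sum exactly $t$ and so the corresponding row/column sum of $N-M$ can be at most $t-1$ only if $M$ covers them. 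So the task reduces to producing a $k$-matching of $H$ covering $A'\sqcup B'$.

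Assuming first that $k\leq r+c$, I would invoke Lemma~\ref{l3} with $G=H$, which requires three ingredients: a $k$-matching $M_1$ of $H$ covering $A'$, a $k$-matching $M_2$ of $H$ covering $B'$, and an $(r+c-k)$-matching $M_3$ inside $H[A'\sqcup B']$. For $M_1$ and $M_2$, I would apply Theorem~\ref{h-representation} to the subgraph $H$: the scaled matrix $\tfrac{1}{t}N\in\FM_k(H)=\M_k(H)$ decomposes as $\sum_i\lambda_i\chi(M_i)$, a convex combination of $k$-matchings of $H$. Since each $a\in A'$ satisfies $\sum_j\tfrac{1}{t}N_{a,j}=1$, every $M_i$ with $\lambda_i>0$ must cover $a$; thus any such $M_i$ covers all of $A'$ and serves as $M_1$, and symmetrically for $M_2$. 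For $M_3$, inclusion--exclusion on the sums over rows in $A'$ and columns in $B'$ gives $\sum_{i\in A',\,j\in B'}N_{ij}\geq t(r+c-k)$, so $\tfrac{1}{t}N|_{A'\times B'}$ is a fractional matching of size at least $r+c-k$ in $H[A'\sqcup B']$; by Theorem~\ref{fractional} this subgraph then has an integer matching of size at least $r+c-k$, from which $M_3$ is extracted.

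The residual case $k>r+c$ does not require Lemma~\ref{l3}: the same convex decomposition of $\tfrac{1}{t}N$ already forces every $M_i$ with $\lambda_i>0$ to cover each vertex of both $A'$ and $B'$, so any such $M_i$ automatically covers $A'\cup B'$ and can be taken as $M$. The main obstacle of the plan is to keep $M_1$, $M_2$, and $M_3$ inside the support graph $H$ rather than the ambient $\K_{n,n}$; this is precisely what the equality $\FM_k(H)=\M_k(H)$ from Theorem~\ref{h-representation}, applied to the induced subgraph, delivers.
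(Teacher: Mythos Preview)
Your argument is correct, and it follows a genuinely different route from the paper's. The paper proceeds by induction on $k$: for $0<k<n$ it pads $N$ with ``red'' ones until it lies in $tB_n$, invokes the already-established normality of $B_n$ to decompose into $t$ perfect matchings, and by averaging extracts one with at least $k$ black ones; in the hard case $r+c>k$ it then appeals to Lemma~\ref{l3}, constructing the required $M_3$ via the inductive hypothesis on the smaller value $r+c-k$. You instead bypass both the induction and the Birkhoff result by applying Theorem~\ref{h-representation} directly to the support graph $H$: since $\tfrac{1}{t}N\in\FM_k(H)=\M_k(H)$, it is a convex combination of $k$-matchings of $H$, and the saturation conditions $\sum_{e\ni v}x_e=1$ for $v\in A'\cup B'$ cut out a face of $\FM_k(H)$, forcing every extreme point in the decomposition to cover $A'\cup B'$. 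This is shorter and logically independent of Theorem~\ref{t1}.

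One remark: your own face argument in the ``residual case $k>r+c$'' nowhere uses the inequality $k>r+c$; it shows unconditionally that every $M_i$ with $\lambda_i>0$ covers all of $A'$ \emph{and} all of $B'$. Hence your first case, together with the construction of $M_3$ and the appeal to Lemma~\ref{l3}, is redundant---any single $M_i$ from the Carath\'eodory decomposition of $\tfrac{1}{t}N$ already serves as the desired $M$.
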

\begin{proof}
The case $k=n$ is Lemma \ref{lemma1matchings}. We prove this result by induction on $k$. The case $k=0$ is trivial, so suppose $0<k<n$, and that the result is true for smaller values.

Since $k<n$, there is a row $i$ such that the sum of its entries is less than $t$, since otherwise the sum of all the entries in $N$ would be $tn$. Similarly there is a column $j$ such that the sum of its entries is less than $t$. We add 1 to the entry $(i,j)$ to obtain a new matrix $N_1$. We now repeat this process to $N_1$ to obtain a new matrix $N_2$. Repeating this process $t(n-k)$ times, we obtain a matrix $N_{t(n-k)}\in B_n$. By the normality of $B_n$, $N_{t(n-k)}=M^*_1+\cdots+M^*_t$ where each $M^*_i$ is a perfect matching of $K_{n,n}$.

Think of the 1's we added to $N$ to obtain $N_{t(n-k)}$ as colored with red, and think of each original entry of $N$ as the sum of black 1's. Since $N_{t(n-k)}=M^*_1+\cdots+M^*_t$, each 1 in $N_{t(n-k)}$ (black or red) has to appear in one of the $M^*_i$'s. The total number of black 1's that appear in all the $M^*_i$'s is $tk$, so the average number of black 1's in each perfect matching is $tk/t=k$. This implies that among the $M^*_i$'s there is at least one, which we denote by $M^*$, with at least $k$ black 1's. This implies that there is a $k$-matching $M$ of $H$, where $H$ is the graph induced by $N$. The problem is that $N-M$ does not necessarily belong to $(t-1)\M_k(\K_{n,n})$ since it may have rows or columns that sum up to $t$, so further analysis has to be done.

Since we never add red 1's in those rows and columns whose entries sum up to $t$, we have a black 1 in each of these rows and columns in $M^*$. Let $r$ and $c$ be the number of rows and columns of $N$, respectively, whose entries sum up to $t$. If $r+c\leq k$, then we can always take $k$ black 1's from $M^*$, to form a $k$-matching $M$ of $H$, in such a way that we take all 1's in those rows and columns whose entries sum up to $t$. This implies that $N-M\in(t-1)\M_k(\K_{n,n})$.

Let $A$ and $B$ be the sets which partition the vertices of $\K_{n,n}$. Since $r,c\leq k$ (otherwise the sum of the entries of $N$ would be greater than $tk$), we can always find a $k$-matching $M_1$ that covers
\[A':=\{\text{vertices in } A \text{ whose respective rows sum up to } t\}\]
and a $k$-matching $M_2$ that covers
\[B':=\{\text{vertices in } B \text{ whose respective columns sum up to } t\}.\]
Let's consider now the case $r+c>k$. Without loss of generality suppose that the first $r$ rows and the first $c$ columns of $N$ are the ones whose entries sum up to $t$. Then $N$ can be written in blocks as
\[
\begin{LARGE}
\begin{bmatrix}
A_1 & A_2 \\
A_3 & A_4 \\
\end{bmatrix} 
\end{LARGE} 
\]
where $A_1$ is a block of size $r\times c$. If $a_i$ denotes the sum of the entries in $A_i$, then $a_1+a_2+a_3\leq tk$. Thus $tr+tc=(a_1+a_2)+(a_1+a_3)\leq a_1+tk$, and therefore $t(r+c-k)\leq a_1$.

If $r=c=k$, $A_2$, $A_3$, and $A_4$ would be blocks with only zeros, and $A_1\in tB_k$. By Lemma \ref{lemma1matchings}, the graph induced by $A_1$ has a perfect matching $P$, and in this case the $k$-matching 
\[M =
\begin{bmatrix} P & \textbf{0}\\
\textbf{0} & \textbf{0}\end{bmatrix}
\]
satisfies that $N-M\in(t-1)\M_k(\K_{n,n})$.

So suppose now that $r$ or $c$ is not equal to $k$, and thus $0<r+c-k<k$.

We reduce some positive entries of $A_1$ to obtain a matrix $C$ such that its entries sum up to $t(r+c-k)$, which we can do since $t(r+c-k)\leq a_1$. Then
\[
\begin{bmatrix} C & \textbf{0}\\ \textbf{0} & \textbf{0} \end{bmatrix}\in t\M_{r+c-k}(\K_{n,n}).
\]
Since $0<r+c-k<k$, by induction, the graph induced by
\[
\begin{bmatrix} C & \textbf{0}\\ \textbf{0} & \textbf{0} \end{bmatrix}
\]
has a matching of size $r+c-k$ of the form
\[
\begin{bmatrix} D & \textbf{0}\\ \textbf{0} & \textbf{0} \end{bmatrix}.
\]
From the way $C$ is obtained, we have that the induced graph of
\[
\begin{bmatrix} C & \textbf{0}\\ \textbf{0} & \textbf{0} \end{bmatrix}
\]
is a subgraph of the induced graph $H$ of $N$. Thus
\[
\begin{bmatrix} D & \textbf{0}\\ \textbf{0} & \textbf{0} \end{bmatrix}
\]
is also a matching of $H$ of size $r+c-k$. That is, we have found a matching $M_3$ of size $r+c-k$ in the subgraph induced by $A'\sqcup B'$. We use Lemma \ref{l3} to conclude that there is a matching $M$ of $H$ of size $k$ covering $A'\sqcup B'$, which implies that $N-M\in(t-1)\M_k(\K_{n,n})$.
\end{proof}
\begin{proposition}\label{propnormal}
The polytope $\M_k(\K_{n,n})$ is normal.
\end{proposition}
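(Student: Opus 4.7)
The plan is to mirror the argument used for Theorem \ref{t1}, doing induction on the dilation factor $t$ and peeling off one $k$-matching at each step. Once Lemma \ref{l1} is in hand, the proof of normality becomes essentially a two-line induction: all of the combinatorial content (finding the matching to remove so that the residual remains in $(t-1)\M_k(\K_{n,n})$) has been absorbed into that lemma.

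More concretely, first I would treat the base case $t=1$: an integer point of $\M_k(\K_{n,n})$ is a 0/1 matrix whose entries sum to $k$, with at most one nonzero entry per row and column, hence it is the characteristic matrix of a $k$-matching and is trivially the sum of one integer point of $\M_k(\K_{n,n})$. Next, for the inductive step, I would assume the statement for $t-1$, take an integer point $N \in t\M_k(\K_{n,n})$, and apply Lemma \ref{l1} to produce a $k$-matching $M$ of the graph induced by $N$ with $N - M \in (t-1)\M_k(\K_{n,n})$. Since $N$ and $M$ are both integer matrices, so is $N - M$, and by the inductive hypothesis it decomposes as a sum of $t-1$ integer points of $\M_k(\K_{n,n})$. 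Adjoining $M$ (which is itself an integer point of $\M_k(\K_{n,n})$) yields the desired decomposition of $N$ into $t$ integer points.

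There is essentially no obstacle left at this stage: the potentially delicate part of the argument, namely producing the $k$-matching that can be legitimately peeled off without violating the row/column bounds $\le t$ in the residual, is precisely the content of Lemma \ref{l1}, whose proof combines Lemma \ref{lemma1matchings}, the normality of $B_n$ (Theorem \ref{t1}), and Lemma \ref{l3} in the delicate case $r + c > k$. So the only thing I need to take care of is to state the induction cleanly and verify that $M$ being a $k$-matching means $\chi(M) \in \M_k(\K_{n,n}) \cap \ZZ^{n\times n}$.

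Finally, combining this proposition with Lemma \ref{lemmasubgraph} immediately yields the paper's main theorem: the $k$-matching polytope of any bipartite graph is normal, since every bipartite graph embeds as a subgraph of some $\K_{n,n}$.
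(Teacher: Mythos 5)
Your proposal is correct and follows essentially the same route as the paper: induction on $t$, with Lemma \ref{l1} supplying the $k$-matching $M$ to peel off so that $N-M\in(t-1)\M_k(\K_{n,n})$, and the base case $t=1$ handled trivially. The added remarks on integrality of $N-M$ and on $\chi(M)$ being an integer point of $\M_k(\K_{n,n})$ are fine and do not change the argument.
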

\begin{proof}
We need to prove that if $N\in t\M_k(\K_{n,n})$, then it can be written as the sum of $t$ integer points of $\M_k(\K_{n,n})$, that is, as the sum of $t$ $k$-matchings of $\K_{n,n}$. We prove this by induction on $t$.

The case $t=1$ is clear. Suppose the result is true for $t-1$, and let us prove it for $t$. By Lemma \ref{l1}, there is a $k$-matching of the induced graph of $N$, which is also a $k$-matching of $\K_{n,n}$, such that $N-M\in(t-1)\M_k(\K_{n,n})$. By the induction hypothesis, $N-M$ is the sum of $t-1$ $k$-matchings of $\K_{n,n}$. Thus $N$ is the sum of $t$ $k$-matchings of $\K_{n,n}$.
\end{proof}
\begin{theorem}
Let $G$ be a bipartite graph and $k\in\mathbb{N}$. Then the polytope $\M_k(G)$ is normal.
\end{theorem}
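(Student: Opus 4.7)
The plan is to deduce this theorem as an immediate corollary of the two preceding results. First I would observe that any bipartite graph $G$ with bipartition $A \sqcup B$ can be viewed as a subgraph of a complete bipartite graph $\K_{n,n}$: setting $n := \max(|A|,|B|)$ and (if needed) padding the smaller side with isolated vertices to obtain a common vertex set, we may regard $G$ as a spanning subgraph of $\K_{n,n}$. This reduction is legitimate because adding isolated vertices does not change the edge set, hence does not change $\M_k(G)$.

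Next I would invoke Proposition \ref{propnormal}, which says that $\M_k(\K_{n,n})$ is normal for every $n,k \in \mathbb{N}$. Finally, applying Lemma \ref{lemmasubgraph} to the pair $G \subseteq \K_{n,n}$ yields that $\M_k(G)$ is normal as well.

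There is essentially no obstacle in this last step: all the substantive work has already been carried out, namely proving normality in the totally symmetric case $\K_{n,n}$ (via Lemma \ref{l1} and its use of Lemma \ref{l3}) and showing in Lemma \ref{lemmasubgraph} that normality of a $k$-matching polytope descends to subgraphs via the zero-pattern argument. The only thing to be mildly careful about is the domain mismatch between $\M_k(G) \subseteq \mathbb{R}^{E(G)}$ and $\M_k(\K_{n,n}) \subseteq \mathbb{R}^{E(\K_{n,n})}$, but this is precisely what Lemma \ref{lemmasubgraph} handles by extending each matching of $G$ by zeros to a matching of $\K_{n,n}$ and observing that decompositions of a nonnegative integer point must respect the support.
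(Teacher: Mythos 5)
Your proposal is correct and follows exactly the paper's own argument: embed $G$ as a subgraph of $\K_{n,n}$ (padding with isolated vertices as needed), then combine Proposition \ref{propnormal} with Lemma \ref{lemmasubgraph}. Your extra remark about the ambient space $\mathbb{R}^{E(G)}$ versus $\mathbb{R}^{E(\K_{n,n})}$ is a sensible clarification already implicit in the lemma's zero-pattern argument.
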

\begin{proof}
As we remarked before, every bipartite graph can be seen as a subgraph of $\K_{n,n}$ for some $n\in\mathbb{Z}^+$. Thus, our result follows from Lemma \ref{lemmasubgraph} and Proposition \ref{propnormal}.
\end{proof}

\bibliographystyle{plain}
\bibliography{references}
\end{document}